\theoremstyle{plain}
\newtheorem{thm}{Theorem}[section]
\newtheorem{theorem}[thm]{Theorem}
\newtheorem{lemma}[thm]{Lemma}
\newtheorem{corollary}[thm]{Corollary}
\newtheorem{remark}[thm]{Remark}
\newtheorem*{example*}{Example}
\newtheorem*{remark*}{Remark}
\def\R{{\mathbb R}}
\def\R{{\mathbb R}}
\begin{document}

\title{\bfseries  Gradient and Transport Estimates for Heat Flow on Nonconvex Domains}

\author{Karl-Theodor Sturm
\\[1cm]
\small Hausdorff Center for Mathematics \& Institute for Applied Mathematics\\
\small University of Bonn, Germany 
}

\maketitle

\abstract
For the Neumann heat flow on nonconvex Riemannian domains $D\subset M$, we provide sharp gradient estimates and transport estimates with a novel $\sqrt t$-dependence, 
for instance,
$$\text{Lip}( P^D_tf)\le e^{2S \, \sqrt{t/\pi}+\mathcal{O}(t)}\cdot \text{Lip} (f),$$
and we provide an equivalent characterization of the lower bound $S$ on the second fundamental form of the boundary in terms of these quantitative estimates.

\section{Introduction and Statement of Main Result}

Throughout this paper we assume:
\begin{itemize}
\item
$M$ is a complete  $n$-dimensional Riemannian manifold and $D\subset M$ is an open set with compact smooth boundary, ${\sf n}:={\sf n}_{\partial D}$ is its inward normal vector.
\item
$(P^D_t)_{t>0}$ is the heat semigroup on $\overline D$ with generator $\Delta$ and Neumann boundary conditions, $p_t(x,y)$ the Neumann heat kernel.
\item $W_q(\, .\,, \, .\,)$ denotes the $q$-Kantorovich-Wasserstein distance 
\item
$s: \partial D\to\R$ denotes the largest lower bound for the second fundamental form ${I\!\!I}_{\partial D}(X,Y):=-\langle\nabla_X\, {\sf n},Y\rangle$ on $\partial D$ and 
$k: M\to\R$ denotes the largest lower bound for the Ricci curvature on $M$ in the sense that
$${I\!\!I}_{\partial D}(v,v)\ge s(x)\, |v|^2, \quad \text{Ric}(w,w)\ge k(x)\, |w|^2\qquad \forall v,w\in T_xM, v\perp{\sf n}_{\partial D}.$$
We assume that $k$ is bounded from below on $\bar D$.
\end{itemize}


\begin{theorem}\label{main}
For any $p\in (1,\infty), q\in [1,\infty)$ and $S\in\R_+$ the following are equivalent:
\begin{itemize}
\item[(i)] $s\ge -S$
\item[(ii)] $\forall  t\in\R_+, \, \mu,\nu\in\mathcal{P}(\bar D)$,
$$W_q(P^D_t\mu,P^D_t\nu)\le e^{2S \, \sqrt{t/\pi}+\mathcal{O}(t)}\cdot W_q(\mu,\nu)$$
\item[(iii)] $\forall  t\in\R_+, \, f\in\mathcal{C}^1(\bar D), \, x\in \bar D$,
$$\big|\nabla P^D_tf|(x)\le e^{2S \, \sqrt{t/\pi}+\mathcal{O}(t)}\cdot P^D_t\big(|\nabla f|^p\big)^{1/p}(x)$$
\item[(iv)]
$\forall  t\in\R_+, \, f\in\mathcal{C}^1(\bar D)$,
$$\text{Lip}( P^D_tf)\le e^{2S \, \sqrt{t/\pi}+\mathcal{O}(t)}\cdot \text{Lip} (f).$$
\end{itemize}
\end{theorem}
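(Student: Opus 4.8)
The plan is to establish the cycle of implications (i) $\Rightarrow$ (iii) $\Rightarrow$ (iv) $\Rightarrow$ (i) and to attach statement (ii) to it by a Kuwada-type gradient/transport duality, so that all the content is concentrated in two places: the quantitative implication (i) $\Rightarrow$ (iii) and the rigidity implication (iv) $\Rightarrow$ (i). Since $\partial D$ is compact and $k$ is bounded below on $\bar D$, for the implications emanating from (i) we may replace $s$ and $k$ by constant bounds $-S$ and $-\kappa$ with $\kappa\in\R$, and by density it suffices to treat $f\in\mathcal{C}^\infty(\bar D)$. Throughout, $(X_s)_{s\ge0}$ denotes the reflected Brownian motion on $\bar D$ with generator $\Delta$ and $(L_s)_{s\ge0}$ its boundary local time; the decisive quantitative input, which I would prove once and for all by an It\^o--Tanaka computation together with Gaussian bounds on $L_t$, is the short-time asymptotics $\EE_{x_0}[L_t]=2\sqrt{t/\pi}+\mathcal{O}(t)$ for $x_0\in\partial D$, whence $\sup_{x\in\bar D}\EE_x[e^{\lambda L_t}]\le e^{2\lambda\sqrt{t/\pi}+\mathcal{O}(t)}$ for every $\lambda\ge0$. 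The coefficient $2/\sqrt\pi$ arises because near $\partial D$ the $\Delta$-diffusion behaves like $\sqrt2\,|B|$, which turns the Gaussian moment $\EE|B_t|$ into $\EE_{x_0}[L_t]=2\sqrt{t/\pi}$ at leading order; this is the origin of the $\sqrt t$-term.

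For (i) $\Rightarrow$ (iii) I would use the Bismut--Elworthy--Li representation for the gradient of the Neumann semigroup: there is a multiplicative functional $(Q_s)$ of endomorphisms of $T\bar D$ along $(X_s)$, solving a linear equation driven by $-\mathrm{Ric}^\sharp$ in the bulk and by $-{I\!\!I}_{\partial D}^\sharp$ against $dL_s$, such that $\nabla P^D_tf(x)=\EE_x\big[Q_t^{*}\,\nabla f(X_t)\big]$. The hypotheses $\mathrm{Ric}\ge-\kappa$ and $s\ge-S$ give the pathwise bound $\|Q_t\|\le e^{\kappa t+SL_t}$, so H\"older's inequality with $q:=p/(p-1)$ yields
\[
|\nabla P^D_tf|(x)\le\EE_x\big[e^{\kappa t+SL_t}\,|\nabla f|(X_t)\big]\le e^{\kappa t}\big(\EE_x[e^{qSL_t}]\big)^{1/q}\big(P^D_t(|\nabla f|^p)(x)\big)^{1/p},
\]
and inserting $\big(\EE_x[e^{qSL_t}]\big)^{1/q}\le e^{2S\sqrt{t/\pi}+\mathcal{O}(t)}$ — where, crucially, the $q$ cancels at leading order, so the $\sqrt t$-coefficient is the same $2S$ for every $p$ — gives exactly (iii). (Alternatively, for $p=2$ one runs the Bochner/Reilly argument: $u_s:=|\nabla P^D_sf|^2$ satisfies $(\partial_s-\Delta)u_s\le2\kappa u_s$ in $D$, and, using the Neumann condition on $P^D_sf$, $\partial_{\sf n}u_s=2\,{I\!\!I}_{\partial D}(\nabla P^D_sf,\nabla P^D_sf)\ge-2S\,u_s$ on $\partial D$; a parabolic comparison principle for this Robin problem then gives $u_t(x)\le\EE_x[e^{2\kappa t+2SL_t}\,|\nabla f|^2(X_t)]$, and the local-time bound concludes.)

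The implication (iii) $\Rightarrow$ (iv) is immediate: $P^D_t$ preserves constants, so $P^D_t(|\nabla f|^p)^{1/p}(x)\le\|\,|\nabla f|\,\|_\infty=\Lip(f)$ (using that $\bar D$ with its intrinsic length metric is a length space, so $\Lip(f)=\|\,|\nabla f|\,\|_\infty$ for $f\in\mathcal{C}^1$), and taking the supremum over $x$ converts (iii) into (iv). Statement (ii) is then attached by Kuwada's duality: the $L^p$-gradient estimate (iii) is equivalent to the $W_q$-contraction (ii) with $\tfrac1p+\tfrac1q=1$, while (iv) is equivalent, by Kantorovich--Rubinstein, to (ii) for $q=1$; since (i) $\Rightarrow$ (iii) holds for \emph{every} $p\in(1,\infty)$, this yields (ii) for every $q\in[1,\infty)$ and lets each of (ii), (iii), (iv) be traced back to (iv).

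For the rigidity (iv) $\Rightarrow$ (i), I would argue by contradiction: suppose ${I\!\!I}_{\partial D}(v,v)<-S$ at some $x_0\in\partial D$ for a unit tangent $v\perp{\sf n}$, and set $S':=-{I\!\!I}_{\partial D}(v,v)>S$. Pick $f\in\mathcal{C}^1(\bar D)$ with $\Lip(f)=1$ agreeing to first order at $x_0$ with the coordinate in direction $v$, so that $|\nabla f|\equiv1$ on a fixed neighbourhood of $x_0$. As $t\to0$ the measure $P^D_t\delta_{x_0}$ concentrates in a $\sqrt t$-neighbourhood of $x_0$, so only the local geometry enters, and a short-time expansion of $\nabla P^D_tf(x_0)=\EE_{x_0}[Q_t^{*}\nabla f(X_t)]$ — to leading order $Q_t^{*}=I-L_t\,{I\!\!I}_{\partial D}^\sharp+\mathcal{O}(t)$ and $\EE_{x_0}[\nabla f(X_t)]=v+\mathcal{O}(t)$ — gives $|\nabla P^D_tf|(x_0)=1+S'\,\EE_{x_0}[L_t]+\mathcal{O}(t)=1+2S'\sqrt{t/\pi}+\mathcal{O}(t)=e^{2S'\sqrt{t/\pi}+\mathcal{O}(t)}$; comparing with the hypothesis $\Lip(P^D_tf)\le e^{2S\sqrt{t/\pi}+\mathcal{O}(t)}$ forces $2(S'-S)\sqrt{t/\pi}\le\mathcal{O}(t)$, which is impossible as $t\to0$. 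The main obstacle is the implication (i) $\Rightarrow$ (iii), and precisely the \emph{sharpness} of the constant: one must verify that the boundary contributes exactly the local-time factor $e^{2S\sqrt{t/\pi}}$, uniformly in the base point, with the Ricci term, the curvature of $\partial D$, and the passage from $p=2$ to general $p$ all relegated to the $\mathcal{O}(t)$ remainder. The rigidity direction is lighter but still requires a careful localization to separate the genuine $\sqrt t$-order term from the $\mathcal{O}(t)$-errors in the short-time expansion.
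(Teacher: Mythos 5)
Your proposal is correct in outline, and the easy parts coincide with the paper: (iii)$\Rightarrow$(iv) is trivial for exactly the reason you give, (ii)$\Leftrightarrow$(iii) and (iv)$\Leftrightarrow$(ii)$_{q=1}$ are handled by Kuwada duality in both. For the forward implication (i)$\Rightarrow$(iii) you and the paper use the same ingredients under different names: your Bismut--Elworthy--Li bound $|\nabla P_tf|\le\mathbb E_x[e^{\kappa t+S L_t}|\nabla f|(X_t)]$ is precisely Wang's pathwise gradient estimate \eqref{wang}, and after H\"older the two proofs differ only in how they turn $\sup_x\mathbb E_x[\ell_t]=2\sqrt{t/\pi}+\mathcal O(t^{3/2})$ into a bound $\big(\mathbb E_x[e^{qS\ell_t}]\big)^{1/q}\le e^{2S\sqrt{t/\pi}+\mathcal O(t)}$: the paper invokes Khasminskii's lemma for the exponential moment, whereas you gesture at ``Gaussian bounds on $L_t$''; either works, but you should be explicit that a first-moment bound alone does not yield an exponential moment bound without some such device, and that it is at this step that the $q$-dependence cancels.

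Where you genuinely deviate is (iv)$\Rightarrow$(i). You propose to evaluate $|\nabla P_tf|$ at the bad boundary point $x_0$ via the \emph{exact} damped-transport representation $\nabla P_tf(x_0)=\mathbb E_{x_0}[Q_t^*\nabla f(X_t)]$ and its short-time expansion $Q_t^*=I-L_t\,{I\!\!I}^\sharp+\mathcal O(t)$. The paper instead never uses an equality for $\nabla P_tf$: it bounds a finite difference $(P_tf(z_r)-P_tf(0))/d(z_r,0)$ along the boundary by applying the It\^o/Skorokhod formula \eqref{ito} directly to $f(x)=x_1(1-C|x|^2)$, choosing $r=C\sqrt t$, and splitting the local-time integral into a contribution near $z_r$ (where $\nabla_{\sf n}f\approx S_1 r$) and a controllably small far contribution. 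The paper's route is more elementary --- it needs only Wang's one-sided inequality plus the scalar It\^o formula --- while yours needs the stronger exact Hsu-type multiplicative functional identity \emph{and} a rigorous expansion of $Q_t^*$ that is uniform enough to survive the expectation (you must control the cross terms $\mathbb E[L_t\cdot(\nabla f(X_t)-v)]$, the $\mathbb E[L_t^2]$ remainder, and you should take $v$ to be an eigenvector of ${I\!\!I}^\sharp_{x_0}$ rather than a generic unit tangent so that ${I\!\!I}^\sharp v=-S'v$). Your version is conceptually cleaner if those points are filled in; the paper's is technically lighter and avoids appealing to a representation formula that the forward direction never required.
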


\begin{remark}
{\bf a)}
The precise meaning of $\mathcal{O}(t)$ in (ii) 
 is: 
\begin{itemize}
\item[(ii')] $\exists C, t_0>0:  \forall  t\in [0,t_0], \, \mu,\nu\in\mathcal{P}(\bar D)$,
$$W_q(P^D_t\mu,P^D_t\nu)\le e^{2S \, \sqrt{t/\pi}+Ct}\cdot W_q(\mu,\nu)$$
\end{itemize}
and similarly for (iii) and (iv).

{\bf b)}
In (iii) and (iii'), the class $\mathcal{C}^1(\bar D)$ can equivalently be replaced by $\mathcal{C}^\infty(\bar D)$ or by $W^{1,p}(\bar D)$.
In (iv) and (iv'),  $\mathcal{C}^1(\bar D)$ can equivalently be replaced by $\mathcal{C}^\infty(\bar D)$ or by $\text{Lip}(\bar D)$.

{\bf c)}
An estimate of the type
$$\text{Lip}( P^D_tf)\le e^{C_0 \sqrt{t}+C_1t}\cdot \text{Lip} (f).$$
appeared first in \cite{St-distr} for the Neumann heat semigroup on the complement of the ball in $\R^n$. The constant $C_0$ there was not calculated explicitly.
\end{remark}

\begin{remark} This characterization of lower bounds for the second fundamental form in terms of the Neumann heat semigroup is very much in the spirit of the {\bf von Renesse-Sturm Theorem \cite{vRS05}}
on the characterization of lower Ricci bounds in terms of the heat semigroup $(P^M_t)_{t>0}$ on $M$:

For any $p\in [1,\infty), q\in [1,\infty]$ and $K\in\R$ the following are equivalent:
\begin{itemize}
\item[(i)] $k\ge -K$
\item[(ii)] $\forall  t\in\R_+, \, \mu,\nu\in\mathcal{P}(M)$,
$$W_q(P^M_t\mu,P^M_t\nu)\le e^{Kt}\cdot W_q(\mu,\nu)$$
\item[(iii)] $\forall  t\in\R_+, \, f\in\mathcal{C}^1(M), \, x\in M$,
$$\big|\nabla P^M_tf|(x)\le e^{Kt}\cdot P^M_t\big(|\nabla f|^p\big)^{1/p}(x)$$
\item[(iv)]
$\forall  t\in\R_+, \, f\in\mathcal{C}^1(M)$,
$$\text{Lip}( P^M_tf)\le e^{Kt}\cdot \text{Lip} (f).$$
\end{itemize}
\end{remark}

\begin{remark} Analogous results hold true for the Neumann heat semigroup  $(P^D_t)_{t>0}$ for any domain $D\subset M$ with smooth boundary provided $D$ is convex --- whereas none of the previous assertions (ii), (iii) and (iv) holds true if $D$ is non-convex.
More precisely, for any $p\in [1,\infty), q\in [1,\infty]$ and $K\in\R$ the following are equivalent:
\begin{itemize}
\item[(i)] $k\ge -K$ on $\bar D$ and $D$ is convex
\item[(ii)] $\forall  t\in\R_+, \, \mu,\nu\in\mathcal{P}(\bar D)$,
$$W_q(P^D_t\mu,P^D_t\nu)\le e^{Kt}\cdot W_q(\mu,\nu)$$
\item[(iii)] $\forall  t\in\R_+, \, f\in\mathcal{C}^1(\bar D), \, x\in \bar D$,
$$\big|\nabla P^D_tf|(x)\le e^{Kt}\cdot P^D_t\big(|\nabla f|^p\big)^{1/p}(x)$$
\item[(iv)]
$\forall  t\in\R_+, \, f\in\mathcal{C}^1(\bar D)$,
$$\text{Lip}( P^D_tf)\le e^{Kt}\cdot \text{Lip} (f).$$
\end{itemize}
\end{remark}

\bigskip

\noindent
{\bf Acknowledgement:} I am very grateful to Mathav Murugan for a kind discussion during the MeRiOT conference 2024 at Lake Como which provided important arguments and stimuli for this research.

\section{Ingredients}
In the sequel we fix $D$ and simply denote $P^D_t$ by $P_t$.
Let $(B_t,\mathbb P_x)_{t\in\R_+, x\in \bar D}$ denote the rescaled reflected Brownian motion associated with the Neumann heat semigroup $(P_t)_{t>0}$ with generator  $\Delta$, and let $(\ell_t)_{t>0}$ denote the associated local time of $\partial D$.
Recall that the latter is characterized by the fact that
\begin{equation}\label{ito}
f(B_t)=f(B_0)+ M^f_t+\int_0^t \Delta f(B_r)\,dr + \int_0^t \nabla_{\sf n} f(B_r)\,d\ell_r\end{equation}
for any $f\in \mathcal C^2(\bar D)$ and a suitable martingale $(M^f_t)_{t>0}$. In the case $M=\R^n$, the martingale is explicitly given as $M^f_t=\int_0^t \nabla f(B_r)\,dB_r$.

\begin{lemma}\label{l3}
 $(\ell_t)_{t>0}$  is the additive functional associated with the surface measure $\sigma$, that is,
\begin{equation}\label{pcaf} \mathbb E_x\left[\int_0^tf(B_r)d\ell_r\right]=\int_{\partial D}\int_0^t f(y) p_r(x,y)dr\,d\sigma(y) \qquad \forall f, t, x.
\end{equation}
\end{lemma}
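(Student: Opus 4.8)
\emph{Proof sketch.} The plan is to read \eqref{pcaf} off from the Itô formula \eqref{ito} by taking expectations and then rewriting the resulting boundary integral via Green's formula and the heat equation. Fix $x\in\bar D$ and choose $f\in\mathcal C^2(\bar D)$ compactly supported in a neighborhood of the (compact) boundary $\partial D$, so that $M^f$ is a genuine martingale. Taking $\mathbb E_x$ in \eqref{ito}, using $\mathbb E_x[f(B_t)]=P_tf(x)$ and Fubini, gives
\[
P_tf(x)=f(x)+\int_0^tP_r(\Delta f)(x)\,dr+\mathbb E_x\Big[\int_0^t\nabla_{\sf n}f(B_r)\,d\ell_r\Big].
\]
(Interchanging $\mathbb E_x$ with the $dr$- and $d\ell_r$-integrals requires an a priori bound; it suffices to know $\mathbb E_x[\ell_t]<\infty$, which follows from the standard near-boundary heat kernel estimate $\int_{\partial D}p_r(x,\cdot)\,d\sigma\le C\,r^{-1/2}$, uniform for $x\in\bar D$, or from the general fact that $\ell$ is a positive continuous additive functional.)

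Next I would rewrite $P_r(\Delta f)(x)=\int_D p_r(x,y)\,\Delta f(y)\,d\vol(y)$ via Green's second identity on $D$ with the inward normal ${\sf n}$ and the Neumann condition $\nabla_{\sf n}p_r(x,\cdot)|_{\partial D}=0$ (legitimate since $p_r(x,\cdot)\in\mathcal C^\infty(\bar D)$ for $r>0$ by parabolic regularity):
\[
P_r(\Delta f)(x)=\int_D f(y)\,\Delta_y p_r(x,y)\,d\vol(y)-\int_{\partial D}p_r(x,y)\,\nabla_{\sf n}f(y)\,d\sigma(y).
\]
Since $\Delta_y p_r(x,y)=\partial_r p_r(x,y)$, the first term equals $\partial_r P_rf(x)$; integrating over $r\in(0,t)$ and using $P_rf(x)\to f(x)$ as $r\downarrow0$ gives $\int_0^tP_r(\Delta f)(x)\,dr=P_tf(x)-f(x)-\int_0^t\!\int_{\partial D}p_r(x,y)\,\nabla_{\sf n}f(y)\,d\sigma(y)\,dr$. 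Substituting into the first display, the terms $P_tf(x)-f(x)$ cancel and one is left with $\mathbb E_x[\int_0^t\nabla_{\sf n}f(B_r)\,d\ell_r]=\int_{\partial D}\int_0^t\nabla_{\sf n}f(y)\,p_r(x,y)\,dr\,d\sigma(y)$, i.e.\ \eqref{pcaf} with $f$ replaced by $\nabla_{\sf n}f$.

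To finish I would upgrade this to arbitrary bounded Borel $f$ in \eqref{pcaf}. Both sides of \eqref{pcaf} depend on the integrand only through its restriction to $\partial D$ --- the left side because $\ell$ grows only while $B_r\in\partial D$. Moreover the trace map $\mathcal C^\infty(\bar D)\to\mathcal C^\infty(\partial D)$, $f\mapsto\nabla_{\sf n}f|_{\partial D}$, is onto: given $g\in\mathcal C^\infty(\partial D)$, in a tubular neighborhood $\partial D\times(-\delta,\delta)\ni(z,s)$ (with $s$ the signed distance to $\partial D$ and $\chi$ a cutoff, $\chi\equiv1$ near $0$) the function $f(z,s):=s\,\chi(s)\,g(z)$ has $\nabla_{\sf n}f|_{\partial D}=\pm g$. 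Hence the identity of the previous paragraph already establishes \eqref{pcaf} for every $f\in\mathcal C(\bar D)$ (approximating $f|_{\partial D}$ uniformly and using $\mathbb E_x[\ell_t]<\infty$), and then for all bounded Borel $f$ by the monotone class theorem.

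I expect the only genuine subtlety to be the bookkeeping near $r\downarrow0$: one needs the a priori finiteness $\mathbb E_x[\ell_t]<\infty$ --- equivalently an integrable near-boundary bound on $r\mapsto\int_{\partial D}p_r(x,\cdot)\,d\sigma$ --- to justify all Fubini interchanges and the convergence of $\int_0^t(\cdots)\,dr$; everything else is the divergence theorem and the heat equation $\partial_rp_r=\Delta_yp_r$. A cleaner but more abstract route avoids these estimates via the Revuz correspondence for the Neumann Dirichlet form $\mathcal E(f):=\int_D|\nabla f|^2\,d\vol$: the surface measure $\sigma$ is a smooth Radon measure on $\bar D$, hence carried by a unique positive continuous additive functional $A^\sigma$, and \eqref{ito} exhibits $\int_0^t\nabla_{\sf n}f(B_r)\,d\ell_r$ as the bounded-variation part of the Fukushima decomposition of $f(B_\cdot)$, with Revuz measure $\nabla_{\sf n}f\cdot\sigma$; choosing $f$ with $\nabla_{\sf n}f\equiv1$ near $\partial D$ then yields $\ell=A^\sigma$.
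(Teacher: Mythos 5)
Your proof is correct and takes a genuinely different route from the paper's. The paper defers to \cite{BH90} for the Euclidean case and to \cite{Ch93} for the Riemannian extension, and its own ``heuristic argumentation'' is an approximation scheme: it realizes $\ell_t$ as the limit of $\int_0^t\rho_\epsilon(B_r)\,dr$, views the reflected Brownian motion as the Mosco limit of drift diffusions with generator $\psi_\epsilon\Delta+\nabla\psi_\epsilon\cdot\nabla$, and passes to the limit in the corresponding It\^o formulas. You instead read \eqref{pcaf} off directly from the given It\^o decomposition \eqref{ito}: take $\mathbb E_x$, rewrite $P_r(\Delta f)(x)$ via Green's second identity on $D$ (using $\nabla_{\sf n}p_r(x,\cdot)|_{\partial D}=0$ and parabolic boundary regularity), use $\Delta_y p_r=\partial_r p_r$ to telescope the volume term against $P_tf-f$, and are left with exactly \eqref{pcaf} for integrands of the form $\nabla_{\sf n}f|_{\partial D}$; since $f\mapsto\nabla_{\sf n}f|_{\partial D}$ is onto $\mathcal C^\infty(\partial D)$ and both sides depend only on the boundary trace, a uniform approximation and a monotone-class step finish. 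Your sign bookkeeping with the inward normal is consistent, and the restriction to $f$ with compact support near $\partial D$ properly handles the martingale term. This computation is more self-contained and avoids the limiting procedure entirely; the trade-off is that it takes \eqref{ito}, boundary smoothness of $p_r(x,\cdot)$, and a priori finiteness of $\mathbb E_x[\ell_t]$ as inputs, whereas the paper's approximation scheme (if carried out rigorously) would also establish \eqref{ito} itself. Your closing remark on the Revuz correspondence is essentially the abstract form of what the cited references prove.
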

Thus in particular
\begin{equation*}\mathbb E_x[\ell_t]=\int_{\partial D}\int_0^t p_r(x,y)dr\,d\sigma(y).\end{equation*}

\begin{proof} The result is proven in the Euclidean setting in \cite{BH90}. The extension to the Riemannian setting is straightforward, see also  \cite{Ch93}.
\end{proof}
\begin{proof}[Heuristic argumentation] 
Let $\psi_\epsilon(x)=\Big(1\wedge \frac1\epsilon\,d(x,M\setminus D)\Big)$ and $\rho_\epsilon(x)=\frac1\epsilon\, {\bf 1}_{(0,\epsilon]}(d(x,M\setminus D))$. Then
$$\rho_\epsilon(dx)dx\stackrel{\epsilon\to0}\longrightarrow \sigma(dx).$$
Thus the positive continuous additive functional (PCAF) associated with the process $(B_t,\mathbb P_x)_{t\in\R_+, x\in \bar D}$ and the measure $\sigma$ is given by
$$A_t=\lim_{\epsilon\to 0}\int_0^t \rho_\epsilon(B_r)dr.$$

 On the level of Dirichlet forms, the reflected Brownian motion $(B_t)_{t>0}$ is the limit of the diffusions with drift $(B^\epsilon_t)_{t>0}$:
$$\mathcal E_\epsilon\big(f,f\big):=\frac12\int_{D} |\nabla f|^2 \psi_\epsilon dx\quad\text{on }L^2(D,   dx)$$
for $\epsilon\to0$ converges in the sense of Mosco to
$$\mathcal E\big(f,f\big):=\frac12\int_{D} |\nabla f|^2  dx\quad\text{on }L^2( D,  dx).$$
The diffusion with drift satisfies the SDE
$$f(B_t^\epsilon)=f(B^\epsilon_0)+\text{martingale} + \int_0^t\Big[(\psi_\epsilon\Delta f)(B^\epsilon_r)+(\nabla\psi_\epsilon\cdot \nabla f)(B^\epsilon_r)\Big]dr.$$
Extend the definition of ${\sf n}:\partial D\to TM$ by setting ${\sf n}(x):=\nabla d(x,M\setminus D)$ for $x\in D$ and 
observe that
 $$\nabla\psi_\epsilon\cdot \nabla f\stackrel{\epsilon\to0}\longrightarrow \rho_\epsilon \cdot \nabla_{\sf n}f.$$
 Taking the convergence $B^\epsilon_t\to B_t$ for granted  and recalling that $\rho_\epsilon(B_r)ds \to dA_r$,
we obtain
 $$f(B_t)=f(B_0)+\text{martingale} + \int_0^t\Delta f(B_r)dr+\int_0^t  \nabla_{\sf n}f(B_r)dA_r.$$
 Thus, by the very definition of the local time,  $\ell_t=A_t$.
 \end{proof}

\begin{lemma}[{\cite{Wang-Book}, Thm.~3.3.1}]  $\forall f, x,t$:
\begin{align}\label{wang}
\big| \nabla P_{t}f\big|(x)\le
\mathbb E_{x}\Big[ e^{-\int_0^{t}  k(B_{r})dr-\int_0^{t} s(B_r)d\ell_r}
\cdot \big|\nabla f(B_{t})\big|\Big].
\end{align}
\end{lemma}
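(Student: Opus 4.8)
The plan is to recover \eqref{wang} --- which is \cite[Thm.~3.3.1]{Wang-Book} --- through a Feynman--Kac argument built on the Bochner formula and Kato's inequality, using exactly the ingredients one has at hand after \eqref{ito} and Lemma~\ref{l3}.

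First I would fix $t>0$, take $f\in\mathcal C^\infty(\bar D)$ (reducing the general case $f\in\mathcal C^1(\bar D)$ to this one by approximation), and set $g_s:=P_{t-s}f$ for $s\in[0,t)$, so that $g_s$ is smooth up to the boundary, solves $\partial_s g_s=-\Delta g_s$, and obeys the Neumann condition $\nabla_{\sf n}g_s=0$ on $\partial D$. The core of the proof is to establish two pointwise differential inequalities for $h(s,\cdot):=|\nabla g_s|$. Inside $D$: the Bochner identity $\Delta|\nabla g|=\tfrac1{|\nabla g|}\big(|\mathrm{Hess}\,g|^2+\langle\nabla g,\nabla\Delta g\rangle+\mathrm{Ric}(\nabla g,\nabla g)\big)-\tfrac1{|\nabla g|}|\nabla|\nabla g||^2$, together with Kato's inequality $|\mathrm{Hess}\,g|^2\ge|\nabla|\nabla g||^2$ and $\mathrm{Ric}\ge k$, gives $\Delta h\ge \tfrac1{|\nabla g_s|}\langle\nabla g_s,\nabla\Delta g_s\rangle+k\,h=-\partial_sh+k\,h$, i.e. $(\partial_s+\Delta-k)h\ge0$. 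On $\partial D$: since $\nabla g_s$ is tangent to $\partial D$ there, $\nabla_{\sf n}h=\tfrac1{|\nabla g_s|}\langle\nabla g_s,\nabla_{\sf n}\nabla g_s\rangle$, and for tangent $X$ symmetry of the Hessian gives $\langle X,\nabla_{\sf n}\nabla g_s\rangle=\langle\nabla_X\nabla g_s,{\sf n}\rangle=X\langle\nabla g_s,{\sf n}\rangle-\langle\nabla g_s,\nabla_X{\sf n}\rangle={I\!\!I}_{\partial D}(X,\nabla g_s)$; hence $\nabla_{\sf n}h={I\!\!I}_{\partial D}(\nabla g_s,\nabla g_s)/|\nabla g_s|\ge s\,h$ on $\partial D$. (To handle the zero set of $\nabla g_s$ I would first run this with $h_\eps:=\sqrt{|\nabla g_s|^2+\eps}$, which satisfies the same inequalities up to an error vanishing as $\eps\to0$, and let $\eps\to0$ at the end.)

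Next I would introduce the multiplicative functional $Z_r:=\exp\!\big(-\int_0^r k(B_u)\,du-\int_0^r s(B_u)\,d\ell_u\big)$ and apply It\^o's formula \eqref{ito} to $r\mapsto Z_r\,h_\eps(r,B_r)$ on $[0,t)$: its drift equals $Z_r\big[(\partial_rh_\eps+\Delta h_\eps-k\,h_\eps)(r,B_r)\,dr+(\nabla_{\sf n}h_\eps-s\,h_\eps)(r,B_r)\,d\ell_r\big]$, which by the two inequalities above is nonnegative up to an error of order $\sqrt\eps$; hence $Z_r h_\eps(r,B_r)$ is (essentially) a submartingale. Evaluating the resulting inequality $\mathbb E_x[Z_0h_\eps(0,B_0)]\le\mathbb E_x[Z_t h_\eps(t,B_t)]$ and letting $\eps\to0$ --- using $h_\eps(0,x)\to|\nabla P_tf|(x)$ and $h_\eps(t,B_t)\to|\nabla f(B_t)|$ together with dominated convergence --- yields precisely $|\nabla P_tf|(x)\le\mathbb E_x[Z_t|\nabla f(B_t)|]$, which is \eqref{wang}.

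The hard part will be the probabilistic bookkeeping in the last step rather than the geometry: justifying It\^o's formula for $Z_rh_\eps(r,B_r)$ across the boundary (the $d\ell_r$-contribution), upgrading the local submartingale to a genuine submartingale and passing to expectations when $D$ is unbounded --- which needs integrability coming from the lower bound on $k$, the boundedness of $s$ on the compact set $\partial D$, and exponential moments of $\ell_r$ via Lemma~\ref{l3} --- and checking the smoothness of $P_{t-s}f$ up to $\partial D$ for $s<t$. By contrast, the only genuinely geometric inputs are the two inequalities for $h$, where the Neumann condition $\nabla_{\sf n}P_{t-s}f=0$ is exactly what converts the boundary normal derivative of $|\nabla P_{t-s}f|$ into the second fundamental form, thereby producing the $-\int_0^t s(B_r)\,d\ell_r$ in the exponent.
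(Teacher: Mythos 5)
Your proof is correct, and it is one of the standard routes to this estimate. Note that the paper itself supplies no proof for this Lemma --- it is stated as a citation to Wang's book (Thm.~3.3.1), with the remark that an analytic proof in the generality of metric measure spaces can be found in \cite{St-distr}, Cor.~6.15 --- so there is nothing in the paper to line up step-by-step against.

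Compared with those references, your argument is a hybrid: the differential-inequality half (Bochner plus Kato's inequality in the interior, and the Neumann condition converting $\nabla_{\sf n}|\nabla P_{t-s}f|$ into ${I\!\!I}_{\partial D}(\nabla P_{t-s}f,\nabla P_{t-s}f)/|\nabla P_{t-s}f|\ge s\,|\nabla P_{t-s}f|$ on $\partial D$) is exactly the analytic core of the Bakry--\'Emery/$\Gamma$-calculus approach in \cite{St-distr}, while the conclusion is reached probabilistically via the It\^o/Feynman--Kac submartingale argument with the multiplicative functional $Z_r=\exp(-\int_0^r k(B_u)\,du-\int_0^r s(B_u)\,d\ell_u)$. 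Wang's (and Hsu's) probabilistic proof instead runs through a tensor-valued damped parallel transport along $(B_r)$ and yields the finer identity $\nabla P_tf(x)=\mathbb E_x\big[Q_t\,\nabla f(B_t)\big]$, from which \eqref{wang} follows by bounding $\|Q_t\|$; your scalar route is shorter and avoids the frame-bundle machinery, at the cost of only producing the scalar inequality. All the pieces check out with the paper's conventions (generator $\Delta$ rather than $\tfrac12\Delta$, so no factor $\tfrac12$ in the Ricci term of $Z_r$; the sign convention ${I\!\!I}_{\partial D}(X,Y)=-\langle\nabla_X{\sf n},Y\rangle$; the use of $h_\eps=\sqrt{|\nabla P_{t-s}f|^2+\eps}$ to handle the zero set of the gradient, with the $O(\sqrt\eps)$ defect in the boundary inequality that you correctly flag). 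The remaining technical points you list --- regularity of $P_{t-s}f$ up to $\partial D$, upgrading the local submartingale to a true one via $k$ bounded below on $\bar D$, $s$ bounded on the compact $\partial D$, and exponential integrability of $\ell_t$ --- are indeed what one would need to spell out, and they are all available in this setting.
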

For an analytic proof, even in the general setting of metric measure spaces, see \cite{St-distr}, Corollary 6.15.

\begin{lemma}[{\cite{Wang-Book}, Lemma 3.1.2}]\label{l2}
\begin{equation}
\sup_{x\in \bar D} \mathbb E_x\big[ \ell_t\big]=2 \sqrt{t/\pi} + {\mathcal O}(t^{3/2}).
\end{equation}
\end{lemma}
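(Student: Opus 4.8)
The plan is to convert the local-time expectation into an integral of the Neumann heat kernel over $\partial D$ and to read off the leading term $2\sqrt{t/\pi}$ from the flat half-space model, the remaining geometry contributing only at lower order. By Lemma~\ref{l3}, for every $x\in\bar D$,
\begin{equation*}
\mathbb E_x[\ell_t]=\int_{\partial D}\int_0^t p_r(x,y)\,dr\,d\sigma(y)=\int_0^t h(r,x)\,dr,\qquad h(r,x):=\int_{\partial D}p_r(x,y)\,d\sigma(y);
\end{equation*}
equivalently, $v(t,x):=\mathbb E_x[\ell_t]$ solves the Neumann problem $\partial_t v=\Delta v$ in $D$, $\nabla_{\sf n}v\equiv-1$ on $\partial D$, $v(0,\cdot)\equiv 0$. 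It thus suffices to understand $h(r,x)$ for small $r$, uniformly in $x$.

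Fix $x$, let $x_0\in\partial D$ realize $\rho:=\d(x,\partial D)$, and split $h(r,x)$ into the contribution of a fixed tubular neighbourhood of $x_0$ in $\partial D$ and that of its complement. The complement contributes $\mathcal O(e^{-c/r})$, uniformly, via the Gaussian upper bound for the Neumann kernel on a domain with compact smooth boundary. On the neighbourhood one uses Fermi coordinates $(z,u)$ with $u=\d(\cdot,\partial D)$, in which $\sqrt{\det g(z,u)}=\sqrt{\det g(z,0)}\,(1-u\,H(z)+\mathcal O(u^2))$ and the heat kernel is the reflected Euclidean Gaussian times an amplitude $1+\mathcal O(|z-z'|+u+u'+r)$, plus lower-order terms. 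Setting $u'=0$ and integrating $z'$ over $\partial D$ against $d\sigma$ — a Gaussian of width $\sqrt r$ in $z-z'$ — the leading term is exactly the half-space value $\frac{2\,e^{-\rho^2/4r}}{(4\pi r)^{n/2}}\cdot(4\pi r)^{(n-1)/2}=\frac{e^{-\rho^2/4r}}{\sqrt{\pi r}}$, while the amplitude and curvature corrections, after the Gaussian integration in $z'$, are of strictly lower order; all implied constants are uniform in $x$ because $\partial D$ is compact, so $\|{I\!\!I}_{\partial D}\|_\infty$, the ambient curvature on $\bar D$, and the reach of $\partial D$ are bounded.

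Since $\rho\mapsto\int_0^t(\pi r)^{-1/2}e^{-\rho^2/4r}\,dr$ is non-increasing with value $2\sqrt{t/\pi}$ at $\rho=0$, taking $\sup_{x\in\bar D}$ and integrating the remainder in $r$ gives $\sup_x\mathbb E_x[\ell_t]=2\sqrt{t/\pi}+\mathcal O(t^{3/2})$. For the upper bound alone one may bypass the kernel expansion by the parabolic maximum principle applied to the problem above: writing $\Psi_0(t,\rho):=\int_0^t(\pi r)^{-1/2}e^{-\rho^2/4r}\,dr$ (so $\partial_t\Psi_0=\partial_\rho^2\Psi_0$, $-1\le\partial_\rho\Psi_0\le0$, $\partial_\rho\Psi_0(t,0)=-1$), the function $w(t,x):=\Psi_0(t,\d(x,\partial D))+Ct$ on $\{\d(\cdot,\partial D)<\delta\}$ satisfies $\partial_t w-\Delta w=C-(\Delta\rho)\,\partial_\rho\Psi_0\ge0$ once $C\ge\|\Delta\rho\|_{L^\infty(\{\rho<\delta\})}$, and $\nabla_{\sf n}w\equiv-1$ on $\partial D$; since $v$ is uniformly bounded and $v(t,\cdot)=\mathcal O(e^{-\delta^2/ct})$ on $\{\d(\cdot,\partial D)=\delta\}$, comparison yields $\sup_x v(t,x)\le 2\sqrt{t/\pi}+Ct$, and a symmetric subsolution gives the matching lower bound at a boundary point.

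The essential difficulty is the \emph{uniform}-in-$x$ near-boundary analysis: justifying the parametrix for a curved boundary and carefully tracking which amplitude and curvature terms survive the integration over $\partial D$, so that the remainder really lands at the claimed order — this is exactly \cite{Wang-Book}, Lemma~3.1.2, and the classical short-time heat-kernel theory on manifolds with boundary. The maximum-principle route replaces this by the (still boundary-geometry-dependent) task of producing sub- and supersolutions whose correction terms absorb $\Delta\rho$; pushing it to the sharp remainder requires incorporating the first curvature correction into those barriers.
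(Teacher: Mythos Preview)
Your parametrix route is exactly the paper's sketch: both invoke Lemma~\ref{l3}, pass to the supremum on $\partial D$, and compare with the half-line Neumann kernel $p^{\R_+}_r(0,0)=2/\sqrt{4\pi r}$, the paper simply asserting the remainder and deferring to \cite{Wang-Book} while you flesh out the Fermi-coordinate analysis. The maximum-principle alternative you append is not in the paper; as you yourself note, without inserting the first curvature correction into the barrier it yields only an $\mathcal O(t)$ remainder rather than the stated $\mathcal O(t^{3/2})$.
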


\begin{proof}[Sketch of proof]
\begin{align*}
\sup_{x\in \bar D} \mathbb E_x\big[ \ell_t\big]&= \sup_{x\in \bar D}\int_{\partial D} \int_0^t p_r(x,y)dr\,\sigma(dy)
=
 \sup_{z\in \partial D}\int_{\partial D} \int_0^t p_r(z,y)dr\,\sigma(dy)\\&
=  \int_0^t p^{\R_+}_r(0,0)dr+ {\mathcal O}(t^{3/2})=\int_0^t\frac2{\sqrt{4\pi r}}dr+ {\mathcal O}(t^{3/2})=
2 \sqrt{t/\pi} + {\mathcal O}(t^{3/2}).
\end{align*}

\end{proof}

\begin{remark}
Let  $(\tilde P_t)_{t>0}$ denote  the Neumann heat semigroup with generator  $\frac12\Delta$, let $(\tilde B_t,\mathbb P_x)_{t\in\R_+, x\in \bar D}$ denote the associated (``standard'') reflected Brownian motion, and let $(\tilde \ell_t)_{t>0}$ denote the associated local time of $\partial D$. Then 
$$\tilde P_tf(x)=P_{t/2}f(x), \quad \tilde B_t=B_{t/2},\quad \tilde\ell_t=\ell_{t/2}.$$
Hence, the estimate from the previous Lemma \ref{wang} can be rewritten as
\begin{align}\label{hsu}
\big| \nabla \tilde P_{t}f\big|(x)\le
\mathbb E_{x}\Big[ e^{-\frac12\int_0^{t}  k(\tilde B_{r})dr-\int_0^{t} s(\tilde B_r)d\tilde\ell_r}
\cdot \big|\nabla f(\tilde B_{t})\big|\Big],
\end{align}
in accordance with \cite{Hs02}, Thm.5.1, and Lemma \ref{l2} reads
\begin{equation}
\sup_{x\in \bar D} \mathbb E_x\big[ \tilde\ell_t\big]=\sqrt{2t/\pi} + {\mathcal O}(t).
\end{equation}
Note that with  these conventions, $(\ell_t)_{t>0}$  is the additive functional associated with the process $(B_t,\mathbb P_x)_{t\in\R_+, x\in \bar D}$ and the measure $\sigma$, whereas 
 $(\tilde\ell_t)_{t>0}$  is $\frac12$ times the additive functional associated with $(\tilde B_t,\mathbb P_x)_{t\in\R_+, x\in \bar D}$ and  $\sigma$. In particular,
$$\ell_t=\lim_{\epsilon\to 0}\int_0^t \rho_\epsilon(B_s)ds, \qquad \tilde\ell_t=\frac12\cdot\lim_{\epsilon\to 0}\int_0^t \rho_\epsilon(\tilde B_r)dr$$
 for functions $\rho_\epsilon$ as above, and
 $$\mathbb E_x\big[ \ell_t\big]=\int_0^t \int_{\bar D}p_r(x,y)d\sigma(y)\,dr, \qquad \mathbb E_x\big[ \tilde\ell_t\big]=\frac12 \int_0^t \int_{\bar D}\tilde p_r(x,y)d\sigma(y)\,dr.
 $$
 \end{remark}

\section{Proof}
The {\sf proof of Theorem \ref{main}} consists of
wo nontrivial implications, namely (i)$\Rightarrow$(iii) and (iv)$\Rightarrow$(i), 
which will be addressed in the subsections below,
and some straightforward implications to be discussed right now.

\begin{description}
\item[(iii)$\Rightarrow$(iv):] Trivial
\item[(iii)$\Leftrightarrow$(ii):] Kuwada duality \cite{Ku10}.
\end{description}

\subsection{Forward implication}

\begin{theorem} Assume $s\ge -S$ for some $S\ge0$. Then for every $p\in (1,\infty)$,
\begin{align}
\big| \nabla P_{t}f\big|(x)\le e^{2S \sqrt{t/\pi}+ \mathcal{O}(t)}\cdot   P_{t}\big(|\nabla f|^p\big)^{1/p}(x).
\end{align}
\end{theorem}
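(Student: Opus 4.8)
The plan is to combine the Bismut-type gradient estimate from Lemma~\ref{wang} with the local-time bound of Lemma~\ref{l2}, interpolated via H\"older's inequality. Starting from
$$\big|\nabla P_tf\big|(x)\le \mathbb E_x\Big[e^{-\int_0^t k(B_r)\,dr-\int_0^t s(B_r)\,d\ell_r}\cdot\big|\nabla f(B_t)\big|\Big],$$
I would first use the hypothesis $s\ge -S$ and the lower bound $k\ge -K$ on $\bar D$ (with $K:=(\inf_{\bar D}k)_-$) to dominate the exponential weight by $e^{Kt+S\ell_t}$, giving
$$\big|\nabla P_tf\big|(x)\le e^{Kt}\,\mathbb E_x\Big[e^{S\ell_t}\cdot\big|\nabla f(B_t)\big|\Big].$$
Then, with $p\in(1,\infty)$ and conjugate exponent $p'=p/(p-1)$, H\"older's inequality splits this as
$$\mathbb E_x\Big[e^{S\ell_t}\cdot\big|\nabla f(B_t)\big|\Big]\le \Big(\mathbb E_x\big[e^{p'S\ell_t}\big]\Big)^{1/p'}\cdot\Big(\mathbb E_x\big[\,|\nabla f(B_t)|^p\,\big]\Big)^{1/p}=\Big(\mathbb E_x\big[e^{p'S\ell_t}\big]\Big)^{1/p'}\cdot P_t\big(|\nabla f|^p\big)^{1/p}(x),$$
using that $(B_t)$ under $\mathbb P_x$ has law $p_t(x,\cdot)$, so $\mathbb E_x[|\nabla f(B_t)|^p]=P_t(|\nabla f|^p)(x)$.

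It remains to control the exponential moment $\mathbb E_x[e^{p'S\ell_t}]$ uniformly in $x\in\bar D$ and show it is $\exp\big(2S\sqrt{t/\pi}+\mathcal O(t)\big)^{p'}$, i.e. that
$$\sup_{x\in\bar D}\tfrac1{p'}\log\mathbb E_x\big[e^{p'S\ell_t}\big]=2S\sqrt{t/\pi}+\mathcal O(t).$$
The natural route is to reduce to the half-line model: the local time $\ell_t$ of $\partial D$ for reflected Brownian motion on $\bar D$ is, up to lower-order corrections, stochastically dominated by (and comparable to) the local time at $0$ of reflected Brownian motion on $\R_+$, whose exponential moments are classical and explicitly computable. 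Concretely, for the half-line the reflecting Brownian motion local time $L_t$ at $0$ satisfies $\mathbb E_0[e^{\lambda L_t}]=e^{\lambda^2 t}\big(1+\mathrm{erf}(\lambda\sqrt t)\big)$ (or a similar closed form depending on normalization), and a short asymptotic analysis gives $\log\mathbb E_0[e^{\lambda L_t}]=\lambda\sqrt{4t/\pi}+\mathcal O(t)=2\lambda\sqrt{t/\pi}+\mathcal O(t)$ as $t\to0$; applied with $\lambda=p'S$ and divided by $p'$ this yields exactly $2S\sqrt{t/\pi}+\mathcal O(t)$, independently of $p$. To transfer this to $\partial D\subset M$ one uses the short-time comparison of the Neumann heat kernel near a smooth boundary with the half-space heat kernel (as already invoked in the sketch proof of Lemma~\ref{l2}), together with a Khasminskii-type lemma: since $\sup_x\mathbb E_x[\ell_t]=2\sqrt{t/\pi}+\mathcal O(t)\to 0$, the Markov property bootstraps the first-moment bound into the exponential-moment bound, with the curvature of $\partial D$ and the geometry of $M$ contributing only to the $\mathcal O(t)$ term.

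The main obstacle is this last step: obtaining the exponential moment bound for $\ell_t$ with the \emph{sharp} constant $2S\sqrt{t/\pi}$ in the $\sqrt t$-term, rather than merely $\mathcal O(\sqrt t)$. The Khasminskii argument gives exponential integrability and the correct order, but extracting the precise leading coefficient requires care — one must show that the correction from iterating the Markov property, and from the discrepancy between $\ell_t$ and its half-line counterpart, is genuinely $\mathcal O(t)$ and does not pollute the $\sqrt t$-coefficient. I expect this to hinge on the refined estimate $\mathbb E_x[\ell_t]=2\sqrt{t/\pi}+\mathcal O(t^{3/2})$ from Lemma~\ref{l2} (note the error is $\mathcal O(t^{3/2})$, not just $\mathcal O(t)$), combined with a second-moment or variance estimate $\mathbb E_x[\ell_t^2]=\mathcal O(t)$, fed into the series expansion $\mathbb E_x[e^{\lambda\ell_t}]=\sum_k \lambda^k\mathbb E_x[\ell_t^k]/k!$ and a subadditivity/Markov argument to control the higher moments $\mathbb E_x[\ell_t^k]\le C^k k!\,(\sqrt t)^k(1+\mathcal O(\sqrt t))$ uniformly. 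Assembling these, $\tfrac1{p'}\log\mathbb E_x[e^{p'S\ell_t}]\le S\sup_x\mathbb E_x[\ell_t]+\mathcal O(t)=2S\sqrt{t/\pi}+\mathcal O(t)$, and absorbing the earlier factor $e^{Kt}$ into $\mathcal O(t)$ completes the proof.
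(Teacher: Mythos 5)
Your overall strategy — Bismut gradient estimate from Lemma~\ref{wang}, H\"older to separate the exponential weight from $|\nabla f(B_t)|$, then a uniform exponential-moment bound on the local time $\ell_t$ — is precisely the paper's. Where you diverge is in your assessment of the final step. You flag as the ``main obstacle'' the extraction of the sharp coefficient $2S\sqrt{t/\pi}$ from $\sup_x\mathbb{E}_x[e^{\lambda\ell_t}]$ and propose, to overcome it, a half-line heat-kernel comparison plus second-moment/series-expansion machinery. The paper does something simpler and sharper: it applies Khasminskii's lemma (in its $\tfrac{1}{1-\alpha}$ form, extended to positive continuous additive functionals) directly, giving
\begin{equation*}
\sup_{x\in\bar D}\mathbb{E}_x\big[e^{\lambda\ell_t}\big]\le\frac{1}{1-\lambda\,\sup_{x\in\bar D}\mathbb{E}_x[\ell_t]}
\qquad\text{whenever }\lambda\,\sup_x\mathbb{E}_x[\ell_t]<1.
\end{equation*}
Since $-\log(1-u)=u+\mathcal{O}(u^2)$ and, by Lemma~\ref{l2}, $\alpha_t:=\sup_x\mathbb{E}_x[\ell_t]=2\sqrt{t/\pi}+\mathcal{O}(t^{3/2})$ with $\alpha_t^2=\mathcal{O}(t)$, the right-hand side is $\exp\big(\lambda\cdot 2\sqrt{t/\pi}+\mathcal{O}(t)\big)$, with the leading coefficient already exact. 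Applied with $\lambda=qS$ ($1/p+1/q=1$) and raised to the power $1/q$, this is $\exp(2S\sqrt{t/\pi}+\mathcal{O}(t))$ — the sharp constant, obtained with no explicit half-line Laplace transform and no separate moment hierarchy. The point you were worried about is not actually an obstacle: the Khasminskii bound $\mathbb{E}_x[\ell_t^n]\le n!\,\alpha_t^n$ (from iterating the Markov property) and the resulting geometric series $\tfrac{1}{1-\lambda\alpha_t}$ automatically carry the first-moment coefficient through to the leading $\sqrt t$-term of the log-Laplace transform; everything of order $\alpha_t^2$ or higher is $\mathcal{O}(t)$. So your proposal is correct in outline but more elaborate than necessary at the crucial step, and the half-line closed-form and second-moment estimates you sketch can be dispensed with entirely.
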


\begin{proof} 
Put $K:=-\inf_{x\in \bar D} k(x)$ and $q$ such that $1/p+1/q=1$. Then according to \eqref{wang},
\begin{align*}
\big| \nabla P_{t}f\big|(x)&\le \mathbb E_{x}\Big[ e^{-\frac{q}2\int_0^{t}  k(B_{r})dr-q\int_0^{t} s(B_r)d\ell_r}
\Big]^{1/q}\cdot \mathbb E_{x}\Big[ \big|\nabla f(B_{t})\big|^p\Big]^{1/p}\\
&\le C_t\cdot   P_{t}\big(|\nabla f|^p\big)^{1/p}(x).
\end{align*}
with 
\begin{align*} C_t:=e^{Kt/2}\cdot \sup_{x\in \bar D}\mathbb E_{x}\Big[ e^{q S \cdot\ell_t}\Big]^{1/q}.
\end{align*}

 According to Khasminskii's Lemma \cite{Kh59}, straightforward extension to additive functionals, and Lemma \ref{l3},
\begin{align} \sup_{x\in \bar D}\mathbb E_{x}\Big[ e^{q S \cdot\ell_t}\Big] \le
\frac1{1-qS \cdot \sup_{x\in \bar D}\mathbb E_{x}\big[\ell_t\big]}
\end{align}
provided $\sup_{x\in \bar D}\mathbb E_{x}\big[\ell_t\big]<\frac1{qS}$.
Combined with Lemma  \ref{l2} thus
\begin{align*} \sup_{x\in \bar D}\mathbb E_{x}\Big[ e^{q S \cdot\ell_t}\Big]^{1/q} &\le \left(
\frac1{1-qS \, \big[2\sqrt{t/\pi}+ {\mathcal O}(t)\big]}\right)^{1/q}\\
&=\exp\left(2S \, \sqrt{t/\pi}+ {\mathcal O}(t)
\right).
\end{align*}
\end{proof}

\begin{corollary}
Assume $s\ge -S$ for some $S\ge0$. Then
\begin{align}
\text{Lip}  ( P_{t}f)\le e^{2S \sqrt{t/\pi}  + \mathcal{O}(t)}\cdot   \text{Lip} (f).
\end{align}
\end{corollary}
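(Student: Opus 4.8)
The plan is to deduce the Lipschitz estimate for $P_tf$ directly from the pointwise gradient estimate established in the preceding theorem, exploiting the fact that the Neumann heat semigroup is a contraction (Markovian) operator. First I would recall that for $f\in\mathcal C^1(\bar D)$ one has $\operatorname{Lip}(f)=\sup_{x\in\bar D}|\nabla f|(x)$ on the (geodesically convex hull of) $\bar D$, or more precisely that $|\nabla g|(x)\le\operatorname{Lip}(g)$ pointwise and $\operatorname{Lip}(g)=\||\nabla g|\|_{L^\infty}$ for Lipschitz $g$ on a length space; the equality is standard and can be invoked from Remark (b), which allows replacing $\mathcal C^1(\bar D)$ by $\operatorname{Lip}(\bar D)$. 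So it suffices to bound $\||\nabla P_tf|\|_{L^\infty(\bar D)}$.

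The key step: apply the gradient estimate
$$\big|\nabla P_tf\big|(x)\le e^{2S\sqrt{t/\pi}+\mathcal O(t)}\cdot P_t\big(|\nabla f|^p\big)^{1/p}(x)$$
and then estimate the right-hand side by monotonicity and the fact that $P_t$ is sub-Markovian (indeed Markovian, $P_t\mathbf 1=\mathbf 1$, since we have Neumann boundary conditions and no killing). Concretely, $|\nabla f|^p\le\operatorname{Lip}(f)^p$ pointwise, hence $P_t(|\nabla f|^p)(x)\le\operatorname{Lip}(f)^p\cdot P_t\mathbf 1(x)=\operatorname{Lip}(f)^p$, so that $P_t(|\nabla f|^p)^{1/p}(x)\le\operatorname{Lip}(f)$. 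Combining the two displays and taking the supremum over $x\in\bar D$ yields
$$\operatorname{Lip}(P_tf)=\sup_{x\in\bar D}\big|\nabla P_tf\big|(x)\le e^{2S\sqrt{t/\pi}+\mathcal O(t)}\cdot\operatorname{Lip}(f),$$
which is the claim. A minor point to address is that $P_tf$ may only be known a priori to be $\mathcal C^1$ (or Lipschitz) up to the boundary; this follows from standard parabolic regularity for the Neumann problem on a domain with smooth compact boundary, or alternatively one works first with $f\in\mathcal C^\infty(\bar D)$ and then passes to the limit using the density and the uniform (in $f$) nature of the bound, as sanctioned by Remark (b).

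The main obstacle, such as it is, is essentially bookkeeping rather than a genuine difficulty: one must make sure the identification $\operatorname{Lip}(g)=\||\nabla g|\|_{L^\infty}$ is legitimate on $\bar D$ — i.e.\ that Lipschitz constants are computed with respect to the intrinsic (length) distance of $\bar D$ and that $\bar D$ is a length space, which holds since $D$ is open with smooth compact boundary in a complete manifold — and that the $\mathcal O(t)$ term, which is uniform in $x$ by Lemma~\ref{l2}, is genuinely independent of $f$, so that it survives passage to the supremum and to the closure of test functions. None of these require new estimates; they are all either quoted from the cited literature or immediate from the construction, so the corollary is indeed an essentially formal consequence of the theorem.
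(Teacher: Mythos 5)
Your proof is correct and follows exactly the route the paper intends: the paper labels the implication (iii)$\Rightarrow$(iv) as ``Trivial'' and leaves it to the reader, and what you write out (apply the pointwise gradient bound, use $|\nabla f|\le\mathrm{Lip}(f)$ and $P_t\mathbf{1}=\mathbf{1}$ to bound the right-hand side, then take the supremum and invoke $\mathrm{Lip}(g)=\||\nabla g|\|_{L^\infty}$ on the length space $\bar D$) is precisely the standard unwinding of that triviality, with the relevant regularity and intrinsic-metric caveats correctly flagged.
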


\subsection{Backward implication}

\begin{theorem}
Assume 
\begin{align}\label{grad-ass}
\text{Lip} ( P_{t}f)\le e^{2S \sqrt{t/\pi}  + \mathcal{O}(t)}\cdot   \text{Lip} (f)
\end{align}
 for some $S\ge0$.
Then $s\ge -S$.
\end{theorem}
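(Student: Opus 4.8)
The plan is to contrapose: assuming that the second fundamental form is not bounded below by $-S$, i.e. that there exists a boundary point $x_0\in\partial D$ and a unit tangent vector $v\perp{\sf n}$ with $s(x_0)<-S$, we will exhibit a test function $f\in\mathcal{C}^1(\bar D)$ and a sequence $t\downarrow 0$ along which the gradient estimate \eqref{grad-ass} is violated. The heuristic is that the $\sqrt t$-term in the exponent has coefficient $2\,\sup_{\partial D}(-s)\sqrt{1/\pi}$, so if $-s$ exceeds $S$ somewhere, the true short-time Lipschitz constant grows faster than $e^{2S\sqrt{t/\pi}+\mathcal{O}(t)}$.

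First I would set up a good test function. Fix $x_0$ and $v$ as above and choose $f\in\mathcal{C}^\infty(\bar D)$ with $|\nabla f|\le 1$ everywhere, $\nabla f(x_0)=v$, and — crucially — such that $f$ is, near $x_0$, essentially the linear-in-boundary-normal-coordinates function adapted to $v$, so that $\text{Lip}(f)=1$ and the only way $|\nabla P_t f|$ can be large is through the boundary-reflection mechanism. Then I would evaluate (a lower bound for) $\text{Lip}(P_tf)$ by lower-bounding $|\nabla P_tf|(x_0)$ using the probabilistic (Bismut/Wang) formula. The idea is that \eqref{wang} is essentially sharp: one has a matching lower bound
$$\big|\nabla P_tf\big|(x_0)\ \ge\ \Big|\mathbb{E}_{x_0}\Big[e^{-\int_0^t k(B_r)\,dr-\int_0^t s(B_r)\,d\ell_r}\,Q_{0,t}\nabla f(B_t)\Big]\Big|,$$
where $Q_{0,t}$ is the relevant (damped) parallel transport along the reflected Brownian path; for a well-chosen $f$ whose gradient is ``aligned'' with this transported direction to leading order, the right-hand side is $\ge \mathbb{E}_{x_0}[e^{-\int_0^t s(B_r)\,d\ell_r}] - o(\sqrt t)$-type quantity. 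Since $s$ is continuous and $s(x_0)<-S$, on a small boundary neighborhood $s\le -S-\delta$ for some $\delta>0$; because the reflected Brownian motion started at $x_0\in\partial D$ accumulates local time only on $\partial D$, and for small $t$ stays in that neighborhood with overwhelming probability, we get
$$\mathbb{E}_{x_0}\big[e^{-\int_0^t s(B_r)\,d\ell_r}\big]\ \ge\ \mathbb{E}_{x_0}\big[e^{(S+\delta)\ell_t}\big]-o(1)\ \ge\ e^{(S+\delta)\,\mathbb{E}_{x_0}[\ell_t]}-o(1)$$
by Jensen, and then Lemma~\ref{l2} (the sharp value $\mathbb{E}_{x_0}[\ell_t]=2\sqrt{t/\pi}+\mathcal{O}(t^{3/2})$, which is attained, not just bounded, at boundary points) gives $\text{Lip}(P_tf)\ge e^{2(S+\delta)\sqrt{t/\pi}+\mathcal{O}(t)}$ for small $t$. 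Comparing with the assumed bound $e^{2S\sqrt{t/\pi}+Ct}$ and dividing, we would need $e^{2\delta\sqrt{t/\pi}}\le e^{C't}$ for all small $t$, which fails as $t\downarrow 0$ since $\sqrt t\gg t$ — contradiction. Hence $s\ge -S$.

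The main obstacle is making the lower bound on $|\nabla P_tf|(x_0)$ rigorous and quantitative enough: one must (a) justify the reverse inequality in the Bismut-type formula, or equivalently produce an explicit $f$ for which $P_tf$ has a controlled gradient from below — this is where one pays attention to the damped parallel transport $Q_{0,t}$ and to the fact that, to leading order in $\sqrt t$, its deviation from the identity is $o(\sqrt t)$ along paths that touch the boundary $O(\sqrt t)$ worth of local time; and (b) control the ``error events'' where the reflected Brownian path leaves the neighborhood of $x_0$ where $s\le -S-\delta$, showing these contribute only $o(\sqrt t)$ to the exponent (standard Gaussian-type small-time large-deviation estimates for reflected Brownian motion, uniform near a smooth boundary). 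A clean way to organize (a) is to localize: replace $M$ near $x_0$ by its model — a half-space with a slightly concave boundary — compute the half-line quantity $\int_0^t p_r^{\R_+}(0,0)\,dr = 2\sqrt{t/\pi}$ explicitly as in the sketch of Lemma~\ref{l2}, and transfer the estimate back via a comparison/coupling argument, absorbing all curvature and geometry corrections into $\mathcal{O}(t)$. Once the lower bound $\text{Lip}(P_tf)\ge e^{2(S+\delta)\sqrt{t/\pi}-C''t}$ is in hand, the conclusion is immediate from the mismatch of the $\sqrt t$ and $t$ scales.
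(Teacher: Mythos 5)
Your overall strategy (contrapositive, choose $x_0$ with $s(x_0)<-S$, localize, exploit $\mathbb{E}[\ell_t]=2\sqrt{t/\pi}+\mathcal{O}(t^{3/2})$, derive a contradiction from the mismatch of $\sqrt t$ and $t$ scales) matches the paper's. However, the \emph{execution} diverges at the crucial step, and it is precisely there that your argument has a genuine gap.

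You propose to lower-bound $|\nabla P_tf|(x_0)$ at a single point by asserting a ``matching lower bound'' to the Wang--Bismut estimate \eqref{wang}. This is not a known fact and cannot simply be invoked. Estimate \eqref{wang} comes from the identity $\nabla P_tf(x)=\mathbb{E}_x\big[Q_{0,t}^{*}\nabla f(B_t)\big]$, where $Q_{0,t}$ is an $\mathrm{End}(T_{x}M)$-valued multiplicative functional; the scalar exponential $e^{-\int_0^t k\,dr-\int_0^t s\,d\ell_r}$ arises only as an upper bound for $\|Q_{0,t}\|_{\mathrm{op}}$. A matching \emph{lower} bound would require controlling the rotation of the damped parallel transport and the alignment of $\nabla f(B_t)$ with the transported direction, i.e.\ ruling out cancellation in the vector-valued expectation; your formula even inserts the scalar exponential where the endomorphism $Q_{0,t}$ should be, which is not legitimate as a lower bound. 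You flag this yourself as ``the main obstacle,'' but the suggested remedies (localization, half-space model, Jensen) address the local-time asymptotics, not the operator-valued nature of $Q$. As written, step (a) is an unproven claim, and it is the heart of the matter.

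The paper avoids this entirely by never touching a gradient formula for $\nabla P_tf$. Instead it applies the semimartingale decomposition \eqref{ito} --- an \emph{identity} --- to the carefully chosen scalar test function $f(x)=x_1(1-C|x|^2)$, computing $P_tf$ at two boundary points $z_r$ and $0$ with $r\sim\sqrt t$, and lower-bounds the finite-difference quotient $\big(P_tf(z_r)-P_tf(0)\big)/d(z_r,0)$. Because $\nabla_{\sf n}f(z)\approx S_1 x_1$ is positive of order $r$ near $z_r$, the local-time term $\mathbb{E}_{z_r}\big[\int_0^t\nabla_{\sf n}f(B_s)\,d\ell_s\big]\approx 2S_1 r\sqrt{t/\pi}$ amplifies $P_tf(z_r)$ above $f(z_r)=r$, and the choice $r=C\sqrt t$ produces the $e^{2S_1(1-\epsilon)\sqrt{t/\pi}}$-growth of the Lipschitz constant directly, with all errors absorbed into $\mathcal{O}(t)$. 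This is strictly more elementary than your route: it only needs the It\^o/Tanaka identity and the local-time asymptotics of Lemma~\ref{l2}, not any Bismut-type representation, let alone a reverse one. If you want to salvage your approach you would need to actually establish the lower bound on the Bismut functional (roughly: $Q_{0,t}=I+\int_0^t I\!\!I\,d\ell_r+\mathcal{O}(t)$ in the aligned direction, with rotation of parallel transport contributing only $\mathcal{O}(t)$), which is doable but substantially more work than the paper's two-point argument and should not be presented as immediate.
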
 

\begin{proof}
Assume $s\not\ge -S$, that is, $s(z_0)\le-S_1$ for some $S_1>S$ and $z_0\in\partial D$, and thus
${I\!\!I}_{\partial D}(v,v)\le -S_1 |v|^2$ for some tangent vector $v\in T_{z_0}^{\perp}$.
Choose normal coordinates around $z_0=:0$ and assume without restriction $v=e_1$. Then 
$$D=\{x\in\R^n: x_n>\psi(x_1,\ldots, x_{n-1})\}$$
(at least when intersected with a ball around $0$)
for some smooth function $\psi$
with
$\psi(x_1,\ldots, x_{n-1})=-\frac12 \sum_{i=1}^{n-1} S_ix_i^2 + \mathcal{O}(|x|^3)$ with $S_1>S\ge0$ as above and arbitrary $S_2,\ldots, S_{n-1}\in\R$.
Thus 
$${\sf n}(z)= \frac1{\sqrt{1+\sum_{i=1}^{n-1} S_i^2x_i^2}} \Big(S_1x_1, \ldots, S_{{n-1}}x_{x_{n-1}}, 1\Big)+ \mathcal{O}(|x|^2)$$
for $z\in\partial D$.

If we choose $f(x):=x_1$ then $|\nabla f|(x)=1+\mathcal O(|x|^2).$
Thus better choose $f(x):=x_1(1-C|x|^2)$ with suitable $C>0$ in a small neighborhood of 0 and bounded smooth outside. 
Then $|\nabla f|(x)\le1$
and
$$\nabla_{\sf n}f(z)= \frac1{\sqrt{1+\sum_{i=1}^{n-1} S_i^2x_i^2}} \, S_1x_1+\mathcal{O}(|x|^2)=S_1x_1+\mathcal{O}(|x|^2).$$

Consider the curve $r\mapsto z(r):=(r,0,\ldots,0, \psi(r,0,\ldots,0))\in\partial D$.
Then
\begin{align*}
P_tf(z_r)&=f(z_r)+\mathbb E_{z_r}\left[\int_0^t \Delta f(B_s)ds+\int_0^t\nabla_{\sf n}f(B_s)d\ell_s\right]\\
&=r+\mathcal{O}(t^{3/2})+\mathbb E_{z_r}\left[\int_0^t \nabla_{\sf n}f(B_s)d\ell_s\right]
\end{align*}
since $f(z_r)=r+ \mathcal{O}(r^3)$ and $\Delta f(x)=\mathcal{O}(|x|)$, thus $\mathbb E_{z_r}\left[\Delta f(B_t)\right]=\mathcal{O}(t^{1/2})$ and
$\int_0^t \mathbb E_{z_r}\left[\Delta f(B_s)\right]ds=\mathcal{O}(t^{3/2})$.
To deal with the contribution of the local time, we decompose it into its contribution on the ball $K_r:=\{x: d(x,z_r)\le \epsilon r\}$ and on the complement of it (for fixed small $\epsilon>0$) and use the fact that
$$ \nabla_{\sf n}f\ge S_1r(1-\epsilon)+\mathcal{O}(r^3)- C_0{\bf 1}_{\bar D\setminus K_r}$$
for some constant $C_0$.
Now for given $\delta>0$ choose $r=C\sqrt t$ with $C$ sufficiently large such that  $C\ge C_0\delta$ and $\int_0^1 \int_{C}^\infty \frac4{\sqrt{4\pi s}}\exp(-\frac{x^2}{4s})dxds\le\delta$. Then similar as in Lemma \ref{l2} we conclude
\begin{align*}
\mathbb E_{z_r}\left[\int_0^{t} {\bf 1}_{\bar D\setminus K_r}(B_s)d\ell_s\right]&=
\int_0^t \int_{r}^\infty \frac4{\sqrt{4\pi s}}\exp\left(-\frac{x^2}{4s}\right)dxds+\mathcal{O}(t^3)\\
&= t\cdot 
\int_0^1 \int_{C}^\infty \frac4{\sqrt{4\pi s}}\exp\left(-\frac{x^2}{4s}\right)dxds+\mathcal{O}(t^3)\\&\le\delta\,t+\mathcal{O}(t^3).
\end{align*}
Thus
\begin{align*}
\mathbb E_{z_r}\left[\int_0^{t} \nabla_{\sf n}f(B_s)d\ell_s\right]&\ge \big(S_1r(1-\epsilon)+\mathcal{O}(r^3)\big)\cdot \mathbb E_{z_r}\big[\ell_{t}\big]
-C_0\, \mathbb E_{z_r}\left[\int_0^{t} {\bf 1}_{\bar D\setminus K_r}(B_s)d\ell_s\right]
\\
&= 2S_1 r(1-\epsilon)\sqrt{t/\pi}+ \mathcal{O}(r^3)+\mathcal{O}(t^{3/2})-C_0\delta t.
\end{align*}

Since $P_tf(0)=0+\mathcal{O}(t^{3/2})$ and $d(0,z_r)=r+ \mathcal{O}(r^3)$ this amounts to
\begin{align*}
\text{Lip}(P_tf)&\quad\ge\quad
\frac{P_tf(z_r)-P_tf(0)}{d(z_r,0)}\\&\quad \ge \quad
 \frac{r+2S_1 r(1-\epsilon)\sqrt{t/\pi}-C_0\delta t+\mathcal{O}(r^3)+\mathcal{O}(t^{3/2})}{r+ \mathcal{O}(r^3)}\\
&\stackrel{r:=C\sqrt t}{=}1+2S_1(1-\epsilon)\cdot \sqrt{{t}/{\pi }}-\delta\sqrt t+\mathcal{O}(t)\\
&\quad=\quad\exp\big(2S_0\cdot \sqrt{{t}/{\pi }}+\mathcal{O}(t)\big)
\end{align*}
with $S_0:=S_1(1-\epsilon)-\sqrt\pi \delta/2$. Choosing $\delta$ and $\epsilon$ sufficiently small, yields $S_0>S$.
This contradicts assumption \eqref{grad-ass}.
\end{proof}


\begin{thebibliography}{St05a}


 \bibitem[BH90]{BH90}
 \textsc{Richard F.~ Bass, Pei Hsu (1990)}:
 The semimartingale structure of reflecting Brownian motion.
 \textit{Proc.~AMS}, 1007-1010.
 
  \bibitem[Ch93]{Ch93}
 \textsc{Zhen-Qing Chen}:
On reflecting diffusion processes and
Skorokhod decompositions.
 \textit{Probab.~Theory Relat.~Fields} {\bf 94}, 281--315.
 
 \bibitem[Hs02]{Hs02}
 \textsc{Elton Hsu (2002}:
 Multiplicative functional for the heat equation on manifolds with boundary. 
 {\bf Michigan Math.~J.} {\bf 50.2} 351--367.
  
  \bibitem[Kh59]{Kh59}
 \textsc{R.~Z.~Khas'miniskii (1959)}: On positive solutions of the equation Uu + Vu = 0.
 \textit{Theoret.~Probability
Appl.} {\bf 4} 309--318.


 \bibitem[Ku10]{Ku10}
 \textsc{Kazumasa Kuwada (2010}:
 Duality on gradient estimates and Wasserstein controls.
 \textit{J.~Functional Analysis} {\bf 258.11} 3758--3774.

 
  \bibitem[vRS05]{vRS05}
 \textsc{M.-K. von Renesse, K.~T.~Sturm (2005)}:
{Transport inequalities, gradient estimates, entropy and Ricci curvature}.
\textit{Comm.~Pure Appl.~Math.}
  {\bf 58},
 {923--940},
 
  \bibitem[St20]{St-distr}
   \textsc{K.~T.~Sturm (2020)}:
Distribution-valued Ricci bounds for metric measure spaces, singular time changes, and gradient estimates for Neumann heat flows. \textit{Geometric and Functional Analysis} {\bf 30}, 1648--1711.

 \bibitem[Wa14]{Wang-Book}
   \textsc{Feng-Yu Wang (2014)}:
    Analysis for diffusion processes on Riemannian manifolds. 
    \textit{World Scientific.}
\end{thebibliography}
\end{document}